\documentclass[letterpaper,11pt]{article}

\usepackage[ top=3cm, bottom=3cm, left=2cm, right=2cm]{geometry}

\usepackage{amsmath, amssymb, latexsym, mathrsfs, color, enumerate, amsthm}
\usepackage{hyperref}

\usepackage{pb-diagram}

\usepackage{tikz-cd}

\usepackage[toc,page]{appendix}

\numberwithin{equation}{section}

\newcommand{\be}{\begin{equation}}
\newcommand{\ee}{\end{equation}}

\newtheorem{theorem}{Theorem}
\newtheorem{definition}{Definition}
\newtheorem{proposition}{Proposition}
\newtheorem{remark}{Remark}

\title{On F-Algebroids and Dubrovin's Duality}

\author{John Alexander Cruz Morales and Alexander Torres-Gomez}

\date{\small \today}

\setlength{\parindent}{0in}

\begin{document}
\maketitle
\begin{abstract}
In this note we introduce the concept of \textit{F-algebroid}, and give its elementary properties and some examples. We provide a description of the almost 
duality for \textit{Frobenius manifolds}, introduced by Dubrovin, in terms of a composition of two anchor maps of a unique cotangent \textit{F-algebroid}.
\end{abstract}

\tableofcontents 

\section{Introduction}
The notion of \textit{Frobenius manifolds} was introduced by Dubrovin \cite{D1,D2, D3} in order to  give a geometrical origin to the WDVV associativity equations 
discovered by E. Witten, R. Dijkgraaf, E. Verlinde, and H. Verlinde, in their study of  topological quantum field theories from the physical point of view. For alternative approaches to introduce the notion of Frobenius manifolds see \cite{Hi, A}.\\

Less than 20 years ago, C. Hertling and Y. Manin \cite{HM} defined what is known today as an \textit{F-manifold} where they described a geometrical structure 
very close related to a \textit{Frobenius manifold} but which does not need the introduction of a metric tensor and underlies any \textit{Frobenius 
manifold}.\\

In this note, inspired by the notion of a \textit{Lie algebroid} \cite{Mac, DZ,CF,F,W} and \textit{F-manifolds} \cite{HM, He}, we define an \textit{F-algebroid} which, roughly speaking, 
is a vector bundle whose fiber is an \textit{F-algebra} and it has a map (called the \textit{anchor}) mapping sections of the vector bundle to sections 
of the tangent bundle satisfying certain conditions, see section \ref{F-algebroid} below. In terms of this new geometrical structure, an \textit{F-manifold} is just 
an \textit{F-algebroid} structure over a tangent bundle whose \textit{anchor} is the identity map.\\

Both \textit{Frobenius manifolds} and \textit{F-manifolds}, as can be seen below,  are structures defined over the tangent bundle, then a natural 
question to ask is if these structures can be generalised to be defined over a general vector bundle. This kind of reasoning was what took us to 
the notion of an \textit{F-algebroid}.\\

In the theory of  \textit{Frobenius manifolds} Dubrovin introduced the idea of almost duality \cite{D2}, which also was extended to $F$-manifolds by Manin \cite{Mani3}.  
This duality relates two different multiplicative structures on the tangent bundle. Using the new structure proposed in this note we are able to 
``explain" this duality as the result of two different anchor maps over a unique cotangent \textit{F-algebroid}, see below for details. \\

In sections \ref{frobenius-manifold} and \ref{f-manifold} we remind the reader the notion of \textit{Frobenius manifold} and \textit{F-manifold}. 
Next, we define what we mean by an \textit{F-algebroid}. Then, we describe the notion of almost dual \textit{Frobenius manifolds} in 
terms of \textit{F-algebroids}. Finally, we give some remarks where we briefly describe different questions about this new structure 
of  \textit{F-algebroids} which we are currently studying. In the appendix we provide the definition of \textit{Lie algebroid} and give some examples. \\ \\ \\

{\bf Acknowledgements} \\

We want to thank Yuri Manin for reading a draft of this note and his valuable comments. In particular, he drew our attention to the paper \cite{Do} by V. Dotsenko and pointed out the possibility of finding relations between the approach in that paper and ours. A. Torres-Gomez is partially supported by Universidad del Norte grant number 2018-17 ``Agenda I+D+I". J.A. Cruz Morales wants to thank Max Plank Institute for Mathematics for its hospitality and support during his visit in January 2018 where part of this work was done.

%%%%%%%%%%%%%%%%%%%%%%%%%%%%%%%%%%%%

\section{Frobenius Manifold}\label{frobenius-manifold}
\subsection{Definition}
Let $M$ be a smooth manifold and $TM$ its tangent bundle with the structures $(\bullet, e, g, C)$ define on it where, for $X,Y,Z \in \Gamma(TM)$, 
\begin{enumerate}
\item $\bullet$ is a multiplicative structure on sections of the tangle bundle  which is commutative and associative, that is, $\bullet: \Gamma(TM) \times \Gamma(TM) \to \Gamma(TM)$ and $X \bullet Y= Y \bullet X$, $(X \bullet Y) \bullet Z= X \bullet (Y \bullet Z)$;

\item $e \in \Gamma(TM)$ is a unit vector field with respect to the multiplicative structure $\bullet$, that is,  $e \bullet X= X$;

\item $g$ is a Riemannian (or semi-Riemannian metric) on $M$ compatible with $\bullet$, that is, $g: \Gamma(TM) \times \Gamma(TM) \to C^\infty(M)$ and $g(X \bullet Y, Z)=g(X, Y \bullet Z)$;

\item $C$ is a 3-tensor field defined by $C(X,Y, Z):=g(X \bullet Y, Z)$.

\end{enumerate}
The manifold $M$ is called a Frobenius manifold if the 6-tuple $(TM, \bullet, e, g, C, \mathcal E)$, where $\mathcal E \in \Gamma(TM)$, satisfies:
\begin{enumerate}[a)]
\item the metric $g$ is flat and the unit vector field $e$ is covariantly constant, that is, $\text{Riemann}(g)=0$ and $\nabla e=0$ where $\nabla$ is the Levi-Civita connection of $g$;

\item the covariant derivative of $C$ is a symmetric 4-tensor field, that is, $\nabla_W C(X,Y,Z)$ must be symmetric in $X,Y,Z,W \in \Gamma(TM)$.

\item the Euler vector field $\mathcal E$ must be linear,  $\nabla \nabla \mathcal E=0$, and for $X,Y \in \Gamma(TM)$ it obeys
\[ \mathcal L_\mathcal{E} (X \bullet Y)- (\mathcal L_\mathcal{E} X) \bullet Y- X \bullet (\mathcal L_\mathcal{E} Y)=X \bullet Y  \ , \] 
\[  \mathcal L_\mathcal{E} g(X, Y)=(2-d)\, g(X,Y) \ , \]
where $\mathcal L$ stands for the Lie derivative and $d$ is called the charge.
\end{enumerate}

Note that for any $p \in M$ the 4-tuple $(T_pM, \bullet_p, g_p, e_p)$ defines a Frobenius algebra.

\subsection{WDVV Equations}
On a Frobenius manifold $M$ there exists  a local system of flat coordinates $(t^1, \cdots, t^n)$, because 
of the flatness condition on the metric $g$. In a coordinate basis $(\partial/\partial t^1, \cdots, \partial/\partial t^n)$ the components of the metric $g$ and the tensor $C$ are given by
\be
\eta_{ij}=g\left(\frac{\partial}{\partial t^i}, \frac{\partial}{\partial t^j} \right) \ ,
\ee
and 
\be
\frac{\partial}{\partial t^i} \bullet \frac{\partial}{\partial t^j} =\sum_k C^k_{ij}(t) \, \frac{\partial}{\partial t^k} \ ,
\ee
where $i,j, k=1, \cdots, n$. Note that the components $\eta_{ij}$ and its inverse $\eta^{ij}$ are constants. Usually this flat coordinates are chosen in such a way that the unit vector field is $e=\partial/\partial t^1$. Moreover, the Euler vector field can be written as 
\be
\mathcal E= \sum_i \left( \sum_j a^i_j t^j+b^i  \right) \frac{\partial}{\partial t^i} \ ,
\ee
for some constants $a^i_j, b^i$ obeying $a^i_1=\delta^i_1$ and  $b^1=0$.\\

The structure functions $C^k_{ij}(t) $ can be written locally as third derivatives of a function $F: M \to \mathbb R$ called  the potential, that is, 
\be
C^k_{ij}(t)=\sum_l \eta^{kl} \frac{\partial^3 F(t)}{\partial t^l \partial t^i \partial t^j} \ ,
\ee
obeying the following equations
\be
\frac{\partial^3 F(t)}{\partial t^1 \partial t^i \partial t^j}= \eta_{ij} \ ,
\ee
\be
\sum_{m,n} \frac{\partial^3 F(t)}{\partial t^i \partial t^j \partial t^m}\, \eta^{mn}  \, \frac{\partial^3 F(t)}{\partial t^n \partial t^k \partial t^l}= \sum_{m,n} \frac{\partial^3 F(t)}{\partial t^l \partial t^j \partial t^m}\, \eta^{mn}  \, \frac{\partial^3 F(t)}{\partial t^n \partial t^k \partial t^i} \ ,
\ee
\be
\mathcal E F=(3-d) F+\frac12 A_{ij} \, t^i t^j+B_i\, t^i+c \ , 
\ee
where $A_{ij}, B_i, c$ are constants. The last two equations are known as WDVV associative equation and quasi-homogeneity condition, respectively.

\subsection{Duality}

The notion of duality between the Frobenius manifold $(M, \bullet, e, \mathcal E, \eta)$ and the almost-Frobenius manifold $(M, \ast, \mathcal E, g )$ is the statement that the multiplicative structure  $\ast$  is given in terms of the multiplicative structures $\bullet$ and the inverse of the Euler vector field $\mathcal E^{-1}$ as
\be
X \ast Y = X \bullet Y \bullet \mathcal E^{-1} \ ,
\ee
and the metric $g$ is written in terms of the metric $\eta$ and $\mathcal E^{-1}$ as
\be
g(X,Y)= \eta(\mathcal E^{-1}\bullet X, Y) \ .
\ee

%%%%%%%%%%%%%%%%%%%%%%%%%%%%%%%%%%%%

\section{F-manifold}\label{f-manifold}
Let $M$ be a manifold and denote by  $[X,Y]$ the Lie bracket between $X, Y \in \Gamma(TM)$. Define $P_X(Y,Z)$, measuring the deviation of the structure $\left(\Gamma(TM), \bullet, [\; ,\;  ]\right)$  from being a Poisson algebra on $(\Gamma(TM), \bullet)$, as
\be
P_X(Y, Z):= [X, Y \bullet  Z ]-[X, Y] \bullet Z- Y \bullet [X, Z] \ . 
\ee

An F-manifold is a triple $(M, \bullet, e)$, where
\begin{enumerate}
\item $\bullet$ is a multiplicative structure on sections of the tangent bundle  which is commutative and associative, that is, $\bullet: \Gamma(TM) \times \Gamma(TM) \to \Gamma(TM)$ and $X \bullet Y= Y \bullet X$, $(X\bullet Y) \bullet Z= X \bullet (Y \bullet Z)$;

\item $e \in \Gamma(TM)$ is a unit vector field with respect to the multiplicative structure $\bullet$, that is,  $e \bullet X= X$;
\end{enumerate}
satisfying the following identity
\be
P_{X \bullet Y}(Z,W)= X \bullet P_Y (Z,W)+ Y \bullet P_X(Z, W)\ ,
\ee
for any $X,Y,Z, W \in \Gamma(TM)$.

\paragraph{Theorem}\textit{(Hertling and Manin, Weak Frobenius Manifolds, 1999)} \cite{HM}\\

\textbf{(a)} Let $(M,e,g,C)$ be a Frobenius manifold with multiplication $\bullet$. Then $(M, \bullet, e)$ is an F-manifold.\\

\textbf{(b)} Let $(M, \bullet)$ be an F-manifold, whose multiplication law is semisimple on an open dense subset. Assume that it admits and invariant flat metric $\eta$ that defines the cubic tensor $C(X,Y,Z)=\eta(X \bullet Y, Z)$. Then $(M, g, C)$ is a Frobenius manifold.

%%%%%%%%%%%%%%%%%%%%%%%%%%%%%%%%%%%%

\section{F-Algebroids}\label{F-algebroid}

\subsection{Definition and Examples of F-Algebroids}
Let $TM$ be the tangent bundle of a manifold $M$;  $[ \; , \; ]$ a Lie bracket between section of $TM$;  $\bullet$ denote a multiplicative commutative and associative structure on section of $TM$; and  $e \in \Gamma(TM)$ a unit vector field with respect to the multiplicative structure $\bullet$. An F-algebroid is a  5-tupla $(E, [ \; , \; ]_E, \diamond, \mathcal U, \rho)$ where
\begin{enumerate}
\item
 $E$ is a vector bundle over a manifold $M$;
 \item
 $[\; , \;  ]_E: \Gamma(E) \times \Gamma(E) \to \Gamma(E)$ is a Lie bracket on sections of the vector bundle, that is, it is anti-symmetric and satisfies Jacobi identity $[\alpha, [\beta, \gamma]_E]_E+[\beta,[\gamma, \alpha ]_E]_E+ [\gamma, [\alpha,\beta ]_E]_E=0$;

\item $\diamond: \Gamma(E) \times \Gamma(E) \to \Gamma(E)$ is a commutative and associative multiplicative structure on sections of the vector bundle, that is, $\alpha \diamond \beta= \beta \diamond \alpha$ and $(\alpha \diamond \beta) \diamond \gamma= \alpha \diamond (\beta \diamond \gamma)$;

\item $\mathcal U \in \Gamma(E)$ is a unit with respect to the multiplicative structure $\diamond$, that is,  $\mathcal U \diamond \alpha= \alpha$;

\item
  $\rho: \Gamma(E) \to \Gamma(TM)$, called the anchor map, relates sections of the vector bundle to sections of the tangent bundle;
\end{enumerate}
satisfying the following conditions for $\alpha, \beta, \gamma, \tau \in \Gamma(E)$  and $f \in C^\infty(M)$:
\begin{enumerate}[a)]
\item defining $P_\alpha(\beta, \gamma)$ as
\[
\mathcal P_\alpha(\beta, \gamma):= [\alpha, \beta \diamond  \gamma ]_E-[\alpha, \beta]_E \diamond \gamma- \beta \diamond [\alpha, \gamma]_E \ , \]
it satisfies \[\mathcal P_{\alpha \diamond \beta}(\gamma, \tau)= \alpha \diamond \mathcal P_\beta (\gamma,\tau)+ \beta \diamond \mathcal P_\alpha(\gamma, \tau) \ ; \]

\item homomorphism  $\rho(\alpha \diamond \beta)= \rho(\alpha) \bullet \rho(\beta)$ \ ;

\item Leibniz rule,  $[\alpha, f\, \beta]_E=(\rho(\alpha)\,f) \beta+ f\,[\alpha, \beta]_E\ ;$

\item Lie algebra homomorphism, $\rho([\alpha, \beta]_E)=[\rho(\alpha), \rho(\beta)] \ .$
\end{enumerate}
That is we have
\[   (E, \diamond, [\; , \;  ]_E, \mathcal U ) \xrightarrow{\;\;\;\;\;\;\rho \;\;\;\;\;\;} (TM, \bullet,[\; , \;  ], e)\]
satistying a), b), c) and d) above. Note that the triple $(TM, \bullet, e)$ is an F-manifold.\\

Some examples of F-algebroids are the following:
\begin{enumerate}

\item \textbf{F-algebras and Frobenius algebras}. Every F-algebra and Frobenius algebra is an F-algebroid over the one point manifold. For the case of Frobenius algebra we introduce a symmetric bilinear form compatible with the product.

\item \textbf{Tangent F-algebroid}. In this case the vector bundle is the tangent bundle of $M$, the anchor map is the identity map, and we recover the definition of F-manifold.

\item In the following three examples, related to the theory of Frobenius manifolds, let the multiplicative structure $\bullet$ be semisimple on an open dense subset and assume that $M$ admits an invariant flat metric $\eta$ that defines the cubic tensor $C(X,Y,Z)=\eta(X \bullet Y, Z)$. 
\begin{enumerate}

\item \textbf{Tangent Frobenius F-algebroid}. Take the vector bundle as the tangent bundle $E=TM$ and the anchor map as the identity $\rho=\text{Id}$. This would correspond to a Frobenius manifold.

\item \textbf{Cotangent Frobenius F-algebroid}. Here the vector bundle is the cotangent bundle of the manifold $M$, $E=T^*M$. In flat coordinates $\{t^i\}$, the metric $\eta$ is written locally as
\be
\eta=\sum_{i,j} \eta_{ij}(t)\, dt^i \otimes dt^j \ ,
\ee
and its inverse
\be
\eta^{\text{inv}}=\sum_{i,j} \eta^{ij}(t)\, \frac{\partial}{\partial t^i} \otimes \frac{\partial}{\partial t^j} \ .
\ee
The anchor is given by
\be
\rho_1= \eta^{\text{inv}\#}=\sum_{i,j} \eta^{ij}(t)\,  \frac{\partial}{\partial t^i} \otimes \frac{\partial}{\partial t^j}  \ ,
\ee
and acts on a base of sections $dt^\alpha \in \Gamma(T^*M)$ as 
\be
\rho_1 (dt^i)= \eta^{\text{inv}\#}(dt^i)=\sum_{j}\eta^{ij}(t)\,  \frac{\partial}{\partial t^j}  \ .
\ee
The symmetric 3-tensor $C$ is written in components as $C= \sum_{i,j,k} C_{ijk}(t) \; dt^i \otimes dt^j \otimes dt^k$. \\

We have:
\[   (T^*M, \diamond, \mathcal U ) \xrightarrow{\rho_1= \eta^{\text{inv}\#}} (TM, \bullet, e)\]
Define the product $\diamond$ as 
\be
dt^i \diamond dt^j =\sum_k C^{ij}_k(t) dt^k \ ,
\ee
where $C^{ij}_k(t)=\eta^{il} \eta^{jm} C_{lmk}$.
From the homomorphism $\rho_1(\alpha \diamond \beta)= \rho_1(\alpha) \bullet \rho_1(\beta)$ we obtain
\begin{align}\label{anchor-CFFA}
\rho_1(dt^i) \bullet \rho_1(dt^i)=& \rho_1(dt^i \diamond dt^j) \ , \notag\\
\sum_{k,l} \eta^{ik} \eta^{jl} \frac{\partial}{\partial t^k} \bullet \frac{\partial}{\partial t^l}= &\sum_k C^{ij}_k \rho_1(dt^k)\ , \notag \\
\sum_{k,l} \eta^{ik} \eta^{jl} \frac{\partial}{\partial t^k} \bullet \frac{\partial}{\partial t^l}=&\sum_{k,m} C^{ij}_k \eta^{km} \frac{\partial}{\partial t^m}\ ,
\end{align}
and thus the product $\bullet$ is given by 
\be
\frac{\partial}{\partial t^k} \bullet \frac{\partial}{\partial t^l}=\sum_m C^m_{kl}(t) \frac{\partial}{\partial t^m}
\ee
The F-manifold $(TM, \bullet, e)$ with $C(X,Y,Z)=\eta(X \bullet Y, Z)$ is a Frobenius manifold.\\

\item \textbf{Cotangent almost Frobenius F-algebroid}. Here we have a similar situation as in the case before where the vector bundle $E=T^*M$ is the cotangent bundle of a manifold $M$. Moreover, we have an Euler vector field $\mathcal E$. This time the anchor map is given by 
\be 
\rho_2=\sum_{i,j,k}\mathcal E^{-1k}(t)\, C^{ij}_k(t)\, \frac{\partial}{\partial t ^i} \otimes \frac{\partial}{\partial t ^j} \ ,
\ee
and acts on a base of sections $dt^i$ as
\be 
\rho_2(dt^i)=\left( \sum_{j,k,l} \mathcal E^{-1k} C^{jl}_k \frac{\partial}{\partial t ^j} \otimes \frac{\partial}{\partial t ^l} \right) (dt^i) =\sum_{j,k}\mathcal E^{-1k} C^{ij}_k \frac{\partial}{\partial t ^j} \ .
\ee
Note that the objet $\sum_k \mathcal E^{k}(t) C^{ij}_k(t)=g^{ij} (t)$ can be considered as a symmetric bilinear form on $T^*M$. We have:
\[   (T^*M, \diamond, \mathcal U ) \xrightarrow{\rho_2=\sum_{i,j,k} \mathcal E^{-1k} C^{ij}_k \frac{\partial}{\partial t ^i} \otimes \frac{\partial}{\partial t ^j}} \left (T(M \setminus \Sigma), \ast, \mathcal E \right)   \]
where $\Sigma \subset M$ is the discriminant\footnote{See \cite{D2} for details about the discriminant.}.\\

Again, define the product $\diamond$ in sections of $T^*M$ as 
\be
dt^i \diamond dt^j =\sum_k C^{ij}_k(t) \; dt^k \ .
\ee
Then, from the homomorphism $\rho_2(\alpha \diamond \beta)= \rho_2(\alpha) \ast \rho_2(\beta)$ we obtain that the product $\ast$ on section of $TM$ now is given by
\begin{align}\label{anchor-CAFFA}
 \rho_2(dt^i) \ast \rho_2(dt^j)=& \rho_2(dt^i \diamond dt^j) \ , \notag\\
\rho_2(dt^i) \ast \rho_2(dt^j)=&\sum_k C^{ij}_k \rho_2(dt^k) \ , \notag \\
\rho_2(dt^i) \ast \rho_2(dt^j)=&\sum_{k,l,m} C^{ij}_k \mathcal E^{-1m} C^{kl}_m \frac{\partial}{\partial t ^l}\ .
\end{align}

\end{enumerate}
\end{enumerate}

%%%%%%%%%%%%%%%%%%%%%%%%%%%%%%%%%%%%

\section{Duality and F-algebroids}

The notion of duality between the Frobenius manifold $(M, \bullet, e, \mathcal E, \eta)$ and the almost-Frobenius manifold $(M, \ast, \mathcal E, g )$ can  be understood as the composition between two anchor maps, as the following theorem shows.
\begin{theorem} 
Let 
\[   (T^*M, \diamond, \mathcal U ) \xrightarrow{\rho_1= \eta^{\text{inv}\#}} (TM, \bullet, e, g, C, \mathcal E)\]
be a cotangent Frobenius F-algebroid and 
\[   (T^*M, \diamond, \mathcal U ) \xrightarrow{\rho_2=\sum_{i,j,k} \mathcal E^{-1k} C^{ij}_k \frac{\partial}{\partial t ^i} \otimes \frac{\partial}{\partial t ^j}} \left (T(M \setminus \Sigma), \ast, \mathcal E, g \right)   \]
a cotangent almost Frobenius F-algebroid. Then, there exists a map called $\mathcal D$ given by 
\be
\mathcal D= \rho_2 \circ \rho_1^{-1}
\ee
relating the two multiplicative structures $\bullet$ and $\ast$ by
\be
X \ast Y = X \bullet Y \bullet \mathcal E^{-1} \ ,
\ee 
and the two metrics $\eta$ and $g$ by
\be
g(X,Y)= \eta(X \bullet \mathcal E^{-1}, Y) \ .
\ee
 In other words, we have the following commuting diagram
\[  
\begin{diagram}
\node{(T^*M, \diamond, \mathcal U )} \arrow{e,t}{\rho_1}  \arrow{se,t} {\rho_2} \node{  (TM, \bullet, e,g, C, \mathcal E)} \arrow{s,r}{D= \rho_2 \circ \rho_1^{-1}} \\
\node{} \node{(T(M \setminus \Sigma), \ast, \mathcal E, g)}
\end{diagram}
 \]
The map $\mathcal D= \rho_2 \circ \rho_1^{-1}$ is what is called a duality in the theory  of Frobenius manifolds.
\end{theorem}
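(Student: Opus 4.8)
The plan is to carry everything out in a local system of flat coordinates $(t^1,\dots,t^n)$ for $\eta$. In these coordinates the first anchor is the raising map, $\rho_1(dt^i)=\sum_j\eta^{ij}\,\partial/\partial t^j$, and the second anchor I take to be the sharp of the contravariant metric appearing in the definition of the cotangent almost Frobenius F-algebroid, namely $\rho_2(dt^i)=\sum_j g^{ij}\,\partial/\partial t^j$ with $g^{ij}=\sum_k\mathcal E^k C^{ij}_k$ and $C^{ij}_k=\sum_{l,m}\eta^{il}\eta^{jm}C_{lmk}$. Since $\eta$ is nondegenerate and $g$ is nondegenerate precisely on $M\setminus\Sigma$ — there $\mathcal E$ is invertible for the fibrewise product $\bullet$, which is the reason the discriminant is excluded — both anchors are bundle isomorphisms, so $\mathcal D=\rho_2\circ\rho_1^{-1}$ is a well-defined bundle isomorphism over $M\setminus\Sigma$ and the triangle in the statement commutes tautologically. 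Here $\rho_1^{-1}$ and $\rho_2^{-1}$ are simply the $\eta$- and $g$-lowering maps, e.g. $\rho_1^{-1}(\partial/\partial t^i)=\sum_j\eta_{ij}\,dt^j$.

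The crux is to identify $\mathcal D$ explicitly on the coordinate frame. I would compute $\mathcal D(\partial/\partial t^i)=\rho_2\bigl(\rho_1^{-1}(\partial/\partial t^i)\bigr)=\rho_2\bigl(\sum_j\eta_{ij}\,dt^j\bigr)=\sum_{j,k}\eta_{ij}g^{jk}\,\partial/\partial t^k$, then use $g^{jk}=\sum_{\ell,a,b}\mathcal E^\ell\eta^{ja}\eta^{kb}C_{ab\ell}$ together with $C_{ab\ell}=\eta(\partial/\partial t^a\bullet\partial/\partial t^b,\partial/\partial t^\ell)$ and $\eta$-invariance of $\bullet$ to rewrite $\sum_\ell\mathcal E^\ell C_{ab\ell}=\eta(\partial/\partial t^a\bullet\mathcal E,\partial/\partial t^b)$; the contractions with $\eta_{ij}$, $\eta^{ja}$, $\eta^{kb}$ then collapse and leave $\mathcal D(\partial/\partial t^i)=\partial/\partial t^i\bullet\mathcal E$. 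Thus $\mathcal D(X)=X\bullet\mathcal E$, and hence $\mathcal D^{-1}(X)=X\bullet\mathcal E^{-1}$, using $\mathcal E\bullet\mathcal E^{-1}=e$. Recognizing $\mathcal D$ as fibrewise $\bullet$-multiplication by the Euler field is the whole content of the theorem; everything else is formal.

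The two displayed duality formulas now follow. For the products: axiom (b) of an F-algebroid makes $\rho_1$ and $\rho_2$ homomorphisms of the commutative associative products out of the common source $(\Gamma(T^*M),\diamond)$, so $\mathcal D$ intertwines them, $\mathcal D(X\bullet Y)=\mathcal D(X)\ast\mathcal D(Y)$; equivalently $X\ast Y=\mathcal D\bigl(\mathcal D^{-1}(X)\bullet\mathcal D^{-1}(Y)\bigr)$, and substituting $\mathcal D^{-1}(Z)=Z\bullet\mathcal E^{-1}$, $\mathcal D(Z)=Z\bullet\mathcal E$ and simplifying with associativity and commutativity yields $X\ast Y=X\bullet Y\bullet\mathcal E^{-1}$. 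For the metrics: since $\rho_2^{-1}$ is the $g$-lowering map, $\rho_1^{-1}$ the $\eta$-lowering map, and $\rho_2^{-1}=\rho_1^{-1}\circ\mathcal D^{-1}$, one gets $g(X,Y)=\langle\rho_2^{-1}X,Y\rangle=\langle\rho_1^{-1}(\mathcal D^{-1}X),Y\rangle=\eta(\mathcal D^{-1}X,Y)=\eta(X\bullet\mathcal E^{-1},Y)$, the stated formula. (Equivalently, read it off from $g_{ij}=\sum_\ell\mathcal E^{-1\ell}C_{ij\ell}=\eta(\partial/\partial t^i\bullet\mathcal E^{-1},\partial/\partial t^j)$, after checking that $\sum_k\mathcal E^k C^{ij}_k$ and $\sum_\ell\mathcal E^{-1\ell}C_{ij\ell}$ are mutually inverse matrices — a short computation using only $e\bullet X=X$, $\mathcal E\bullet\mathcal E^{-1}=e$, and $\eta$-invariance.)

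The step I expect to be the main obstacle is pinning down the exponent of $\mathcal E$ in the identification of $\mathcal D$: it is sensitive to the precise normalization of the anchor $\rho_2$ relative to the metric $g$ and to keeping the raising/lowering conventions for $\eta$, $g$, $C$ perfectly straight, and a single misplaced $\mathcal E$ versus $\mathcal E^{-1}$ there propagates into the final formulas. I would also be careful to say that $\rho_2$, hence $\mathcal D$, is an isomorphism only over $M\setminus\Sigma$, so the assertions about $\ast$ and $g$ are understood there. Beyond this, nothing is needed except the F-algebroid axioms (a)--(d), the definition of $\diamond$, the Frobenius relation $C_{ijk}=\eta(\partial/\partial t^i\bullet\partial/\partial t^j,\partial/\partial t^k)$, and nondegeneracy of $\eta$ and $g$; all remaining manipulations are routine index algebra.
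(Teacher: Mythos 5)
Your derivation reaches the stated formulas, but it does so by quietly replacing the anchor that the theorem actually specifies: you take $\rho_2=g^{\#}$ with $g^{ij}=\sum_k\mathcal E^{k}C^{ij}_k$, whereas the statement (and the paper's proof) uses $\rho_2=\sum_{i,j,k}\mathcal E^{-1k}C^{ij}_k\,\partial_i\otimes\partial_j$, i.e.\ the components of $\mathcal E^{-1}$, not of $\mathcal E$. This is exactly the normalization issue you flagged at the end, and it is not cosmetic. With the anchor as written, $\rho_2=(\mathcal E^{-1}\bullet)\circ\eta^{\#}$, so $\mathcal D=\rho_2\circ\rho_1^{-1}$ is multiplication by $\mathcal E^{-1}$, not by $\mathcal E$; feeding that into your intertwining argument $X\ast Y=\mathcal D\bigl(\mathcal D^{-1}(X)\bullet\mathcal D^{-1}(Y)\bigr)$ produces $X\ast Y=X\bullet Y\bullet\mathcal E$, the wrong exponent. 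The paper avoids this because its proof never uses the homomorphism axiom (b) at this stage: it computes directly in indices that $\rho_2\circ\rho_1^{-1}(X\bullet Y)=X\bullet Y\bullet\mathcal E^{-1}$ and that $\rho_2\circ\rho_1^{-1}(X)=X\bullet\mathcal E^{-1}$, and then reads ``relating the two products'' as the identity $\mathcal D(X\bullet Y)=X\ast Y$ (so $\ast=\mathcal D\circ\bullet$ as binary operations), and ``relating the two metrics'' as $g(X,Y)=\eta(\mathcal D X,Y)$. In other words, in the paper $\mathcal D$ is \emph{not} asserted to be an algebra isomorphism from $(\,\bullet\,)$ to $(\,\ast\,)$, which is the property your argument rests on.

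So either commit to the anchor in the statement and redo the argument the paper's way (direct computation of $\mathcal D(X\bullet Y)$ and $\mathcal D(X)$, then identify the results with $X\ast Y$ and with $\eta$-lowering of $g$), or state explicitly that you are renormalizing $\rho_2$ to $g^{\#}$ --- which is in fact the choice under which axiom (b) holds with Dubrovin's product $X\ast Y=X\bullet Y\bullet\mathcal E^{-1}$ and under which $\rho_2^{-1}$ really is the $g$-lowering map used in your metric computation. Your conceptual route (anchors as product homomorphisms out of the common source, $\mathcal D$ as fibrewise multiplication by a power of $\mathcal E$, metrics via $\rho_2^{-1}=\rho_1^{-1}\circ\mathcal D^{-1}$) is cleaner and more structural than the paper's index manipulation, and your side checks (mutual inverseness of $\sum_k\mathcal E^kC^{ij}_k$ and $\sum_\ell\mathcal E^{-1\ell}C_{ij\ell}$, invertibility only off the discriminant $\Sigma$) are correct; but as a proof of the theorem \emph{as stated} it needs the discrepancy in $\rho_2$ resolved, since a reader comparing your $\mathcal D(X)=X\bullet\mathcal E$ with the formula $g(X,Y)=\eta(X\bullet\mathcal E^{-1},Y)=\eta(\mathcal D^{-1}X,Y)$ versus the paper's $\mathcal D(X)=X\bullet\mathcal E^{-1}$ will otherwise conclude that one of the two is off by a square of the Euler field.
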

\begin{proof}
Let $\{ t^i \}$ be flat coordinates and $\{ dt^i \}$ and $\left\{ \frac{\partial}{\partial t^i} \right\}$ a base of section of $T^*M$ and $TM$, respectively. In the coordinates $\{ t^i \}$, we can write, locally,
\be
\rho_1=\sum_{i,j} \eta^{ij}(t)\, \frac{\partial}{\partial t^i} \otimes \frac{\partial}{\partial t^j} \ ,
\ee
\be
\rho_1^{-1}=\sum_{i,j} \eta_{ij}(t)\, dt^i \otimes dt^j \ ,
\ee
and 
\be 
\rho_2=\sum_{i,j,k}\mathcal E^{-1k}(t)\, C^{ij}_k(t)\, \frac{\partial}{\partial t ^i} \otimes \frac{\partial}{\partial t ^j} \ .
\ee
Let $X, Y \in TM$, then locally we have 
\begin{align}
X=&\sum_i X^i(t) \, \frac{\partial}{\partial t^i} \ , &
Y=&\sum_j Y^j(t) \, \frac{\partial}{\partial t^j} \ .
\end{align}
Let us divide the proof in two parts corresponding to the map between the multiplicative structures and the metrics.
\paragraph{Multiplicative structures}
Locally, $(X \bullet Y) \in TM$ is written as
\be 
X \bullet Y=\sum_{i,j} X^i Y^j\,  \frac{\partial}{\partial t ^i} \bullet  \frac{\partial}{\partial t ^j}=\sum_{i,j,k} C^k_{ij} X^i Y^j\,  \frac{\partial}{\partial t ^k} \ .
\ee
Acting with $\rho_1^{-1}$ on  $(X \bullet Y) \in TM$ we obtain
\be
\rho_1^{-1}(X \bullet Y)= \sum_{i,j,k} C^k_{ij} X^i Y^j \, \rho_1^{-1} \left( \frac{\partial}{\partial t^k}\right)=\sum_{i,j,k} C_{ijk}X^i Y^j \, dt^k \ .
\ee
Now, acting with $\rho_2$ on $\rho_1^{-1}(X \bullet Y) \in T^*M$ we find
\begin{align}
\rho_2(\rho_1^{-1}(X \bullet Y))=\sum_{i,j,k} &C_{ijk}X^i Y^j \, \rho_2(dt^k)=\sum_{i,j,k,l,m} C_{ijk} C^{km}_l X^i Y^j \mathcal E^{-1 l} \,  \frac{\partial}{\partial t ^m} \ , \notag \\
=& X \bullet Y \bullet \mathcal E^{-1} \ .
\end{align}
Thus, acting with the map $\mathcal D= \rho_2 \circ \rho_1^{-1}$ on $(X \bullet Y) \in TM$ we find
\be
\mathcal D(X \bullet Y)= \rho_2 \circ \rho_1^{-1} (X \bullet Y)= X \bullet Y \bullet \mathcal E^{-1}= X \ast Y \ ,
\ee
proving the relation between the multiplicative structures $\bullet$ and $\ast$.
\paragraph{Metrics}
Acting with $\rho_1^{-1}$ on $X \in TM$ we get
\be
\rho_1^{-1}(X)=\sum_i X^i \,  \rho_1^{-1}( \frac{\partial}{\partial t^i})=\sum_{i,j} \eta_{ij} X^i \, dt^j \ .
\ee
Now, acting with $\rho_2$ on $\rho_1^{-1}(X)$ we find
\be
\rho_2 \circ \rho_1^{-1}(X)=\sum_{i,j} \eta_{ij} X^i \, \rho_2(dt^j)=\sum_{i,j,k} C^k_{ij}X^i \mathcal E^{-1 j} \, \frac{\partial}{\partial t^k} =X \bullet \mathcal E^{-1} \ .
\ee
Then, the two metrics $\eta$ and $g$ are related as
\be
g(X, Y)= \eta(X \bullet \mathcal E^{-1}, Y) \ .
\ee
\end{proof}

\begin{remark} 
This Theorem also holds when we consider the tangent $F$-algebroid where the underlying $F$-manifold algebra is semisimple. In this case the proof is similar to
that of Theorem 1.
 
\end{remark}

%%%%%%%%%%%%%%%%%%%%%%%%%%%%%%%%%%%%

\section{Final Remarks}

In this final section we will consider some questions and perspectives for future work. \\

So far, we have been discussing how different multiplicative structures can be related and the role played by the anchor maps in order to establish a duality 
between them. However, we can consider more than two products and it makes sense to ask how they are 
related. This question is motivated by studying how the anchor maps behave under composition (we only consider the semisimple case).\\

Let us consider the following situation. Let ($M, \star_0$) and  ($M, \star_1$) be two dual $F$-manifold algebras such that 
$X \star_1 Y = X \star_0 Y \star_0 \mathcal{E}_0^{-1}$. Now consider an F-manifold algebra ($M, \star_2$) dual to ($M, \star_1$), meaning
$X \star_2 Y = X \star_1 Y \star_1 \mathcal{E}_1^{-1}$. A natural question is: Is there a relation between ($M, \star_0$) and ($M, \star_2$)? A simple computation
leads to 

\[ X \star_2 Y = X \star_0 Y \star_0 (\mathcal{E}_0^{-1})^2 \star_0 \mathcal{E}_1^{-1} \ . \]

Note that the invertible element $\mathcal E_1^{-1}$ for the product $\star_1$ is not necessarily an invertible element for $\star_0$, so $\star_0$ and $\star_2$ are not 
dual in the sense we have been discussing above. This motivates the following definition.

\begin{definition}
Let $\mathcal{E}_0$ be an eventual identity\footnote{See\cite{DS} for the definition of eventual identities.} for $\star_0$ and  $\mathcal E_1$ be an eventual identity for a product $\star_1$ dual to $\star_0$. An 
element $\mathcal{I}$ is a pseudo-eventual identity for the product $\star_0$ if $\mathcal{I} = \mathcal E_0 \star_0 \mathcal E_1^{-1}$  and the product
$X \star_2 Y = X \star_0 Y \star_0 \mathcal{I}$  defines an $F$-manifold algebra structure.  \\

In this case we will say that ($M, \star_0$) and ($M, \star_2$) are pseudo-dual structures.
\end{definition}

It is easy to see that the notion of pseudo-duality can be defined as composition of anchor maps for the corresponding $F$-algebroids in the same way that the 
notion of duality. That composition defines a map $\mathcal{P}$ called pseudo-duality map.

\begin{proposition}
 The composition of the maps $\mathcal{D}_0$ and $\mathcal{D}_1$ in Theorem 1 is the map $\mathcal{P}_{01}$ relating the pseudo-dual products 
 $\star_0$ and $\star_2$.
 \end{proposition}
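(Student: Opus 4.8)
The plan is to mimic the proof of Theorem 1, tracking the two anchor maps for each dual pair through composition. I would set up three cotangent F-algebroids over $M$ (working on the semisimple locus, away from discriminants): the base structure $(T^*M,\diamond,\mathcal U)\xrightarrow{\rho_1}(TM,\star_0,\mathcal E_0,\eta_0)$, its dual $\xrightarrow{\rho_2}(TM,\star_1,\mathcal E_1,\eta_1)$, and then the analogous pair for $\star_1$ and $\star_2$, giving a third anchor $\rho_3$ with $\rho_3(dt^i)=\sum_{j,k}\mathcal E_1^{-1k}\,(C_1)^{ij}_k\,\partial/\partial t^j$, where $(C_1)$ are the structure constants built from $\star_1$ and the metric $\eta_1$. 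By Theorem 1, $\mathcal D_0=\rho_2\circ\rho_1^{-1}$ realizes the duality $X\star_1 Y=X\star_0 Y\star_0\mathcal E_0^{-1}$, and $\mathcal D_1=\rho_3\circ\rho_2^{-1}$ realizes $X\star_2 Y=X\star_1 Y\star_1\mathcal E_1^{-1}$.

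Next I would compute the composition $\mathcal D_1\circ\mathcal D_0=\rho_3\circ\rho_2^{-1}\circ\rho_2\circ\rho_1^{-1}=\rho_3\circ\rho_1^{-1}$, so the $\rho_2$ factors cancel and the composite is governed by a single anchor $\rho_3$ precomposed with $\rho_1^{-1}$. Applying this to a product element $X\star_0 Y$ and unwinding exactly as in the Metrics/Multiplicative-structures parts of the proof of Theorem 1: $\rho_1^{-1}(X\star_0 Y)$ lowers indices with $\eta_0$, and then $\rho_3$ raises them with $\eta_1^{\mathrm{inv}}$ and inserts $\mathcal E_1^{-1}$. Since the relation $X\star_1 Y=X\star_0 Y\star_0\mathcal E_0^{-1}$ together with the metric relation $\eta_1(X,Y)=\eta_0(X\star_0\mathcal E_0^{-1},Y)$ identifies the $\eta_1$-raising in terms of $\eta_0$-raising composed with multiplication by $\mathcal E_0^{-1}$, each pass through $\rho_3\circ\rho_1^{-1}$ contributes one factor of $\mathcal E_0^{-1}$ from the metric mismatch and the explicit $\star_1$-multiplication by $\mathcal E_1^{-1}$ becomes $\star_0$-multiplication by $\mathcal E_1^{-1}$ after another factor of $\mathcal E_0^{-1}$. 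Assembling these gives $\mathcal D_1\circ\mathcal D_0(X\star_0 Y)=X\star_0 Y\star_0(\mathcal E_0^{-1})^2\star_0\mathcal E_1^{-1}=X\star_0 Y\star_0\mathcal I$, which by the Final Remarks computation is precisely $X\star_2 Y$; hence $\mathcal D_1\circ\mathcal D_0$ coincides with the pseudo-duality map $\mathcal P_{01}$.

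The main obstacle I anticipate is bookkeeping the metric pass, i.e.\ making rigorous the claim that raising an index with $\eta_1^{\mathrm{inv}}$ equals raising with $\eta_0^{\mathrm{inv}}$ and then $\star_0$-multiplying by $\mathcal E_0^{-1}$; this is the identity $\eta_1^{\mathrm{inv}}=\eta_0^{\mathrm{inv}}\circ(\mathcal E_0^{-1}\star_0-)$ dual to the stated $g(X,Y)=\eta(X\bullet\mathcal E^{-1},Y)$, and one must check it is consistent with the definition of $\rho_3$ in terms of the $\star_1$-structure constants $(C_1)^{ij}_k$ rather than introducing an extra factor. Equivalently, one can sidestep the metric discussion entirely: since $\mathcal D_1\circ\mathcal D_0=\rho_3\circ\rho_1^{-1}$ and the statement only concerns the induced product relation, it suffices to verify on product elements that $\rho_3\circ\rho_1^{-1}(X\star_0 Y)=X\star_2 Y$, using the already-established duality formulas for $\star_1$ in terms of $\star_0$ and for $\star_2$ in terms of $\star_1$, together with associativity and commutativity of $\star_0$; the accumulation of the two $\mathcal E_0^{-1}$ factors and one $\mathcal E_1^{-1}$ factor is then a short associativity rearrangement, exactly the ``simple computation'' already displayed before the definition of pseudo-eventual identity. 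I would present the argument in that streamlined form, invoking Theorem 1 for the cancellation of $\rho_2$ and for the individual duality relations, and close by noting that $\mathcal P_{01}$ was \emph{defined} as the anchor-composition map for the pseudo-dual pair, so the equality $\mathcal P_{01}=\mathcal D_1\circ\mathcal D_0$ is what the computation establishes.
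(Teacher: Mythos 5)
Your proposal is correct and takes essentially the same route as the paper: the paper likewise telescopes the anchor compositions (writing $\mathcal{P}_{01}=\rho_2\circ\rho_0^{-1}$ in its indexing, with $\rho_2=\mathcal{D}_1\circ\rho_1$) and defers the substantive identification of the composite anchor map with the pseudo-dual product to ``an argument similar to that of Theorem 1.'' Your write-up actually supplies more of that deferred verification (the metric bookkeeping and the accumulation of the $\mathcal{E}_0^{-1}$ and $\mathcal{E}_1^{-1}$ factors) than the paper's own two-line proof does.
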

 
\begin{proof} Let us consider the following diagram 

\[  
\begin{diagram}
\node{(T^*M_0, \diamond, \mathcal U ) } \arrow{e,t}{\rho_0}  \arrow{se,t} {\rho_1} \arrow{sse,b} {\rho_2}\node{ (TM_0, \star_0, e_0, \mathcal E_0)} \arrow{s,r}{D_0= \rho_1 \circ \rho_0^{-1}} \\
\node{} \node{ (TM_1, \ast_1,e_1, \mathcal E_1)}\arrow{s,r}{D_1= \rho_2 \circ \rho_1^{-1}} \\
\node{} \node{ (TM_2, \ast_2,e_2, \mathcal E_2)}
\end{diagram}\\
 \]
 
Observe that $\mathcal{P}_{01} = \rho_2 \circ \rho_0^{-1}$. This follows from an argument similar to that of Theorem 1. Since 
$\rho_2 = \mathcal{D}_1 \circ \rho_1$ we have that $\mathcal{P}_{01} = \mathcal{D}_1 \circ \mathcal{D}_0$

\end{proof} 
 
In general, we can have an enumerable family of products and then we can ask how the products in this family relate to each other. We have the following proposition.

 \begin{proposition}
 Consider the family of products $ \{ \star_i \}_{i \in I}$ where $I$ is an ordered set such that $\star_i$ is dual to $\star_{i+1}$. Let 
 $\mathcal{E}_i^{-1}$ be an eventual identity for 
 $\star_i$ and $\mathfrak{I}$ be a power of a pseudo-eventual identity for $\star_i$. If $j \geqq  i+ 3$ then the product
 $X \star_j Y = X \star_i Y \star_i \mathfrak{I} \star_i \mathcal{E}_i^{-1}$ defines an $F$-manifold algebra structure. 
 \end{proposition}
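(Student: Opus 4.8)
The claim that $\star_j$ defines an $F$-manifold algebra is the easy half. By hypothesis $\star_j$ sits inside the chain $\star_i,\star_{i+1},\dots,\star_j$ in which each product is the dual of the preceding one, and passing to a dual product sends $F$-manifold algebras to $F$-manifold algebras; in our language this is exactly what Theorem 1 packages (together with the Hertling--Manin theorem), since $\star_j$ is obtained from $\star_i$ by applying the composite anchor map $\rho_j\circ\rho_i^{-1}=\mathcal D_{j-1}\circ\dots\circ\mathcal D_i$, precisely as in Proposition 1 but iterated $j-i$ times. So the content of the statement is the closed formula $X\star_jY=X\star_iY\star_i\mathfrak I\star_i\mathcal E_i^{-1}$, and the plan is to obtain it by iterating the defining relations.

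First I would telescope the duality relations. Using $X\star_{k+1}Y=X\star_kY\star_k\mathcal E_k^{-1}$ repeatedly, and at each stage rewriting the $\star_{k+1}$-inverse of the eventual identity $\mathcal E_{k+1}$ as its $\star_k$-inverse — which, since the unit of $\star_{k+1}$ is $\mathcal E_k$, introduces precisely the factor needed to cancel the extra copies of the twist produced when a $\star_{k+1}$-product is unfolded into a $\star_k$-product — a short induction on $n=j-i$ yields
\[ X\star_jY=X\star_iY\star_i\mathcal E_{j-1}^{-1}, \]
where now $\mathcal E_{j-1}^{-1}$ denotes the $\star_i$-inverse of the eventual identity $\mathcal E_{j-1}$. (For $n=2$ this is, after converting and simplifying, the identity $X\star_2Y=X\star_0Y\star_0(\mathcal E_0^{-1})^2\star_0\mathcal E_1^{-1}$ recorded before the Definition.) This step is the same coordinate-free manipulation carried out in the proofs of Theorem 1 and Proposition 1, only iterated.

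Second, I would repackage the twist. Splitting off one copy of the eventual identity of the original duality, $\mathcal E_{j-1}^{-1}=\big(\mathcal E_{j-1}^{-1}\star_i\mathcal E_i\big)\star_i\mathcal E_i^{-1}$, so that with $\mathfrak I:=\mathcal E_{j-1}^{-1}\star_i\mathcal E_i$ one has $X\star_jY=X\star_iY\star_i\mathfrak I\star_i\mathcal E_i^{-1}$. It then remains to recognise, for $j\geqq i+3$, that $\mathfrak I$ is a power of a pseudo-eventual identity for $\star_i$: here one re-enters through the Definition, noting that $\mathcal E_{i+1}^{-1}$ (the $\star_i$-inverse) is the pseudo-eventual identity $\mathcal I_i$ with $X\star_{i+2}Y=X\star_iY\star_i\mathcal I_i$, and re-expressing $\mathcal E_{j-1}^{-1}\star_i\mathcal E_i$ in terms of the $\mathcal I_k$ by iterating Proposition 1 along the consecutive triples $(\star_k,\star_{k+1},\star_{k+2})$.

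The hard part is exactly this last recognition. The telescoping and the inverse-conversion bookkeeping are routine commutative and associative algebra for $\star_i$; but verifying that the accumulated twist, after one factor $\mathcal E_i^{-1}$ has been peeled off, really is a power of a pseudo-eventual identity for $\star_i$ — rather than some less structured element — requires knowing that the class of elements by which $\star_i$ may be twisted while remaining within $F$-manifold algebras is stable under $\star_i$-multiplication and under $\star_i$-powers, which is the input one borrows from the theory of eventual identities of \cite{DS}. It is also here, and only here, that the hypothesis $j\geqq i+3$ enters: for $j=i+2$ the twist is literally the pseudo-eventual identity $\mathcal I_i$ of Proposition 1, whereas for larger gaps it is a power of one together with the extra factor $\mathcal E_i^{-1}$.
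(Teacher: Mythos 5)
Your overall architecture (telescope the chain of dualities down to $\star_i$, then repackage the accumulated twist) is reasonable, but the proposal has a genuine gap at precisely the point you yourself flag as ``the hard part'': you never actually show that $\mathfrak I:=\mathcal E_{j-1}^{-1}\star_i\mathcal E_i$ is a power of a pseudo-eventual identity for $\star_i$. That identification \emph{is} the content of the proposition; deferring it to the stability theory of eventual identities in \cite{DS} and to an unspecified ``iteration of Proposition 1 along consecutive triples'' leaves the statement unproved. Moreover, your insistence on peeling off exactly the factor $\mathcal E_i^{-1}$ forces the leftover twist to absorb unit-conversion factors (the units of the intermediate products are the successive $\mathcal E_k$'s, not $e_i$), so even the bookkeeping that would exhibit $\mathfrak I$ as $\mathcal I^2$ times something controlled is nontrivial and is not carried out.

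The paper avoids the recognition problem entirely by a different decomposition. Taking $i=0$, $j=3$ without loss of generality, it starts from $X\star_3Y=X\star_2Y\star_2\mathcal E_2^{-1}$ and then invokes the \emph{pseudo-duality} relation of the previous proposition, $A\star_2B=A\star_0B\star_0\mathcal I$, \emph{twice}: once to rewrite $X\star_2Y$ and once to expand the outer product with $\mathcal E_2^{-1}$. This gives
\[ X\star_3Y=(X\star_0Y\star_0\mathcal I)\star_2\mathcal E_2^{-1}=(X\star_0Y\star_0\mathcal I)\star_0\mathcal E_2^{-1}\star_0\mathcal I=X\star_0Y\star_0\mathcal I^2\star_0\mathcal E_2^{-1}, \]
so the power $\mathcal I^2$ of the pseudo-eventual identity appears on the nose, with no separate lemma needed; the general case is the same argument repeated. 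Two further points of comparison: the paper's final factor is $\mathcal E_2^{-1}$ (the inverse of the eventual identity two steps up the chain), not the $\mathcal E_0^{-1}$ your normalization produces, which is another sign that your repackaging and the paper's are not the same identity; and the paper, like you, treats the ``defines an $F$-manifold algebra structure'' clause as following from the chain of dualities/pseudo-dualities rather than verifying the Hertling--Manin identity directly, so that part of your argument is aligned with the source. If you replace your second and third steps by the double application of the pseudo-duality relation, your proof closes.
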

 
\begin{proof}
 
Without loss of generality we will consider four products $\star_0$, $\star_1$, $\star_2$ and $\star_3$, such that $\star_i$ is dual to $\star_{i+1}$ for
$i = 0,1,2,3$. Since $\star_2$ and $\star_3$ are dual then $X \star_3 Y = X \star_2 Y \star_2 \mathcal{E}_2^{-1}$. For the previous proposition $\star_0$ and
$\star_{2}$ are pseudo-dual, so
 \[X \star_3 Y = (X \star_0 Y \star_0 \mathcal{I}) \star_2 \mathcal{E}_2^{-1}= 
(X \star_0 Y \star_0 \mathcal{I}) \star_0 \mathcal{E}_2^{-1} \star_0 \mathcal{I} = 
X \star_0 Y \star_0 \mathcal{I}^2 \star_0 \mathcal{E}_2^{-1} \ ,\] 
where $\mathcal{I}$ 
is a pseudo-eventual identity for $\star_0$. The same argument works 
for more than four products. 
\end{proof}

Now consider the following diagram \\ 

\[
\begin{tikzcd} [column sep=huge]
(T^*M_0, \diamond, \mathcal U )  \arrow[r, "\rho_0"] 
\arrow{dr}[description]{\rho_i} \arrow{ddr} [description]{\rho_{i+1}} \arrow{dddr} [description]{\rho_{j}}
& \hspace{5pt}  (TM_0, \star_0,e_0, \mathcal E_0) \arrow[d,dotted]\\
& \hspace{5pt} (TM_i, \star_i,e_i, \mathcal E_i) \arrow[d, "\mathcal D_i= \rho_{i+1} \circ \rho_i^{-1}"]
\\
& \hspace{40pt} (TM_{i+1}, \star_{i+1},e_{i+1}, \mathcal E_{i+1})  \arrow[d,dotted]
\\
&\hspace{10pt} (TM_j, \star_j,e_j, \mathcal E_j)  \arrow[d,dotted] \\
& \; 
\end{tikzcd}
\] \\

From the diagram it is easy to see that the relation between the product $\star_i$ and $\star_j$ can be understood as composition of the associated anchor maps. 
This relation is weaker than the duality and the pseudo-duality defined above, so we will call it weak duality. An analogue of proposition 1 is true for weak dualities. 
This means, that weak duality is a composition of dualities (or pseudo-dualities). \\

The notion of weak duality and the family of products behind it lead to the following questions: Is it possible to consider a chain of $F$-algebroids connected 
by duality, pseudo-duality and weak duality maps? Which properties this chain will have? \\

In mathematics the notion of horizontal categorification, also known as ``oidification", is a procedure giving a structure $X$ the structure $X$-oid. 
For example, starting with a Hopf algebra to construct a Hopf algebroid. In our work, we have been constructing a $F$-algebroid by mimicking the construction of 
Lie algebroids, so a natural question is whether the object we have defined corresponds to the horizontal categorification of $F$-manifold algebras. Let us point out 
that, at least for us, it is not clear how Lie algebroids are horizontal categorification of Lie algebras, so in that sense our question is also valid in the 
Lie theoretic setting. We think that this question could be related to some discussions in \cite{Mani2}. \\ 

Another further direction to work in the formalism of $F$-algebroids is the study of the analogies between $F$-geometry and Poisson geometry. We do not have a precise picture here
but we believe that this question is related to a bigger question asking for studying the geometry of $F$-algebroids. This question comes with the necessity of 
constructing more examples of $F$-algebroids with interesting mathematical structures. Note that in the theory of Lie algebroids there is a 1-1 correspondence between Lie algebroid structures on a vector bundle and homogeneous Poisson structures on the total space of the dual bundle. We
believe that we should have an analogue result in the context of $F$-algebroids. \\

Following ideas in \cite{Mani4}, we consider that $F$-algebroids might be an adequate context to study deformations of
Frobenius manifolds and F-manifolds in analogy to the theory of deformations of complex manifolds \cite{K, Mane}. On the other hand, since the duality of Dubrovin can be seen, in 
some cases, as instance of mirror symmetry phenomenon, we also think that $F$-algebroids can play a role in mirror symmetry. \\

Finally, we think that giving an algebraic definition of our F-algebroid (similar to the Lie-Rinehart pair description of a Lie algebroid) we could make contact with the construction of the operad \textit{FMan} in \cite{Do}, this is work in progress.

\begin{appendices}

\section{Lie Algebroid}

Some reviews on Lie algebroids can be read in \cite{Mac, DZ, CF, F, W}.

\subsection{Definition and some examples}

A Lie algebroid is a triple $(E, [\; , \; ]_E, \rho)$ where
\begin{enumerate}
\item
 $E$ is a vector bundle over a manifold $M$, 
 \item
 $[\; , \; ]_E: \Gamma(E) \times \Gamma(E) \to \Gamma(E)$ is a Lie bracket on sections of the vector bundle, that is, it is anti-symmetric and satisfies Jacobi identity $[\alpha, [\beta, \gamma]_E]_E+[\beta,[\gamma, \alpha ]_E]_E+ [\gamma, [\alpha,\beta ]_E]_E=0$, and

 \item
  $\rho: \Gamma(E) \to \Gamma(TM)$, called the anchor map, maps sections of the vector bundle to sections of the tangent bundle. 
\end{enumerate}
The Lie bracket $[\; , \; ]_E$ and the anchor $\rho$ satisfy the following conditions
for $\alpha, \beta, \gamma \in \Gamma(E)$ and $f \in C^\infty(M)$:
\begin{enumerate}[a)]
\item Leibniz rule,  $[\alpha, f\, \beta]_E=(\rho(\alpha)\,f) \, \beta+ f\,[\alpha, \beta]_E\ ;$

\item Lie algebra homomorphism, $\rho([\alpha, \beta]_E)=[\rho(\alpha), \rho(\beta)] \ ;$
\end{enumerate}

A few examples of Lie algebroids are the following
\begin{itemize}

\item \textbf{Lie algebra}. Every Lie algebra is a Lie algebroid over the one point manifold.

\item \textbf{Tangent Lie algebroid}. In this case the vector bundle is the tangent bundle of $M$, the anchor map is the identity map and the bracket is the usual Lie bracket of vector fields. 

\item \textbf{Cotangent Lie algebroid}. Here the vector bundle is the cotangent bundle of a Poisson manifold $M$ with Poisson bivector $\Pi$. We define the anchor map as follows: 
\begin{itemize}
\item Let $\alpha \in \Gamma(T^*M)$ be a section of the cotangent bundle, then the anchor map is $\rho(\alpha)=\Pi^{\#}(\alpha)$.\\
In a coordinate basis $\{ X^i\}$, with $\Pi=\sum_{i,j} \frac12 \Pi^{ij}(X) \frac{\partial}{\partial X^i} \wedge\frac{\partial}{\partial X^j}$ and $\alpha=\sum_i \alpha_i \, dX^i$, we have $\Pi^{\#}(\alpha)=\sum_{i,j}\alpha_i \Pi^{ij}\frac{\partial}{\partial X^j}$; and 

\item Let $\alpha$ and $\beta \in  \Gamma(T^*M)$, then the bracket is
\[ [\alpha, \beta]= \mathcal L_{\Pi^{\#}(\alpha)} \beta-\mathcal L_{\Pi^{\#}(\beta)} \alpha+ d\Pi(\alpha, \beta)  \ , \]
where $\mathcal L_{\Pi^{\#}(\alpha)}$ denotes the Lie derivative of the 1-form $\beta$ in the direction of the vector field $\Pi^{\#}(\alpha)$ and $\Pi(\alpha, \beta)$ the natural pairing between the vector $\Pi^{\#}(\alpha)$ and the 1-form $\beta$. Cotangent Lie algebroids can be understood from the perspective of Poisson geometry.
\end{itemize}

\end{itemize}

\end{appendices}

\addcontentsline{toc}{section}{References}

%\vspace*{3cm}
------------------------------\\
John Alexander Cruz Morales,\\
Departamento de Matem\'aticas, Universidad Nacional de Colombia,\\
Bogot\'a, Colombia,\\ 
jacruzmo@unal.edu.co\\ [0.3cm]

Alexander Torres-Gomez\\
Departamento de Matem\'aticas y Estad\'istica, Universidad del Norte,\\
Barranquilla, Colombia,\\ 
alexander.torres.gomez@gmail.com

\end{document}